\newtheorem{theorem}{Theorem}
\newtheorem{corollary}{Corollary} 
\newtheorem{lemma}{Lemma}
\def \n{\noindent }
\def \bs{\bigskip}
\def \Z{\mathbb Z}
\def \Z{\mathbb Z}
\title{A zero-sum theorem over $\mathbb{Z}$} 
\author{M. Sahs, P. Sissokho, and J. Torf}
\address{4520 Mathematics Department, Illinois State University, 
Normal, Illinois 61790--4520, U.S.A.}
\email{\{mlsahs|psissok|jntorf\}@ilstu.edu}
\begin{document}
\begin{abstract}
A {\em zero-sum} sequence of integers is a sequence of nonzero terms that sum to $0$.
Let $k>0$ be an integer and let $[-k,k]$ denote the set of all nonzero 
integers between $-k$ and $k$. Let $\ell(k)$ be the smallest integer $\ell$ such 
that any zero-sum sequence with  elements from $[-k,k]$ and length greater than $\ell$ 
contains a proper nonempty zero-sum subsequence. In this paper, we prove a more 
general result which implies that $\ell(k)=2k-1$ for $k>1$.
\end{abstract}
\keywords{Zero-sum sequence, vector space partition.}
\maketitle
\section{Introduction}\label{sec:intro}
For any multiset $S$, let $|S|$ denote the number of elements in $S$, let $\max(S)$ denote
the maximum element in $S$, and let $\Sigma S=\sum_{s\in S}s$. 
Let $A$ and $B$ be nonempty multisets of positive integers. The pair $\{A,B\}$ is said to be 
{\em irreducible} if $\Sigma A=\Sigma B$, and 
for every nonempty proper mutisubsets $A'\subset A$ and $B'\subset B$, 
$\Sigma A'\not=\Sigma B'$ holds. If $\{A,B\}$ fails to be irreducible, we say that it is 
{\em reducible}. It is easy to see that if $\{A,B\}$ is irreducible, then $A\cap B=\emptyset$
or $|A|=|B|=1$. 

We define the {\em length} of $\{A,B\}$ as
\[\ell(A,B)=|A|+|B|.\]
An irreducible pair $\{A,B\}$ is said to be {\em $k$-irreducible} if $\max(A\cup B)\leq k$. 
We define
\begin{equation}\label{def:lk}
\ell(k)=\max\limits_{\{A,B\}}\ell(A,B),
\end{equation}
where the maximum is taken over all $k$-irreducible pairs $\{A,B\}$. 

For $k>1$, let 
\begin{equation}\label{exp:lb}
A=\{\underbrace{k,\ldots,k}_{k-1}\} \mbox{ and } B=\{\underbrace{k-1,\ldots,k-1}_k\}.
\end{equation}
Then $\{A,B\}$ is $k-$irreducible and $\ell(A,B)=2k-1$. This implies that $\ell(k)\geq 2k-1$. 
El-Zanati, Seelinger, Sissokho, Spence, and Vanden Eynden introduced $k$-irreducible pairs
in connection with their work on irreducible $\lambda$-fold partitions 
(e.g., see~\cite{ESSSV}). They also conjectured that $\ell(k)=2k-1$. 
In the our main theorem below, we prove a more general result which implies this conjecture in our main theorem below.
\begin{theorem}\label{thm:1}
If $\{A,B\}$ is an irreducible pair, then $|A|\leq \max(B)$ and $|B|\leq \max(A)$.
Consequently, $\ell(k)=2k-1$ if $k>1$.
\end{theorem}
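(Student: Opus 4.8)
The plan is to prove the structural inequality $|A| \le \max(B)$ (the bound $|B| \le \max(A)$ follows by symmetry), since the consequence $\ell(k) = 2k-1$ is then immediate: if $\{A,B\}$ is $k$-irreducible, then $\max(A \cup B) \le k$ gives $|A| \le \max(B) \le k$ and $|B| \le \max(A) \le k$, so $\ell(A,B) = |A| + |B| \le 2k$; and one rules out the equality case $|A| = |B| = k$ because that forces $\max(A) = \max(B) = k$, making $A$ and $B$ both equal to the constant multiset $\{k, \dots, k\}$ of size $k$, which is reducible (delete one $k$ from each). Hence $\ell(A,B) \le 2k-1$, and combined with the lower bound from~\eqref{exp:lb} this yields $\ell(k) = 2k-1$.

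For the main inequality, write $m = \max(B)$ and suppose toward a contradiction that $|A| \ge m+1$. The key idea is to consider the partial sums of $A$. List the elements of $A$ as $a_1, a_2, \dots, a_n$ with $n = |A| \ge m+1$, and form the partial sums $s_0 = 0, s_1 = a_1, s_2 = a_1 + a_2, \dots, s_n = \Sigma A = \Sigma B$. I want to find two partial sums $s_i < s_j$ of $A$ and some partial sum $t$ of $B$ with $s_i \le t \le s_j$ in a way that lets me extract equal-sum proper subsets; more precisely, the strategy is to track the partial sums of $B$, listed say as $0 = t_0 < \cdots$ after reordering $B$, and to locate, for a suitable partial sum $s_i$ of $A$, a partial sum $t_j$ of $B$ that coincides with it — contradicting irreducibility unless the subsets involved are trivial. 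The pigeonhole input is that the $n \ge m+1$ "gaps" $s_1 - s_0, s_2 - s_1, \dots$ cannot all be bridged by the at most... — the cleanest route is: since every element of $B$ is at most $m$, the partial sums of $B$ increase in steps of size at most $m$, so consecutive partial sums of $B$ cannot skip over an interval of length $\ge m$; then a counting argument on how many partial sums of $A$ lie in $[0, \Sigma A]$ versus how the partial sums of $B$ are spaced forces a collision $s_i = t_j$ with $0 < i < n$ and $0 < j < |B|$, i.e.\ a proper nonempty zero-sum reduction.

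I expect the main obstacle to be handling the interaction with \emph{negative} elements — the problem is stated over $\Z$, so the partial sums $s_i$ need not be monotincreasing, which breaks the naive pigeonhole on an interval. The fix is to pass to the multiset formulation already set up in the excerpt: an irreducible \emph{pair} $\{A,B\}$ of multisets of \emph{positive} integers encodes a zero-sum sequence by taking $A$ to be the positive terms and $-B$ the negative terms, so all partial-sum arguments can be run with genuinely increasing sums. Within that setting, the remaining technical care is in choosing the right ordering of $A$ and $B$ (for instance, sorting so that partial sums interleave as tightly as possible) and in verifying that the collision $s_i = t_j$ one produces genuinely has $0 < i < |A|$ and $0 < j < |B|$ so that both extracted subsets $A' = \{a_1,\dots,a_i\}$ and $B' = \{b_1,\dots,b_j\}$ are proper and nonempty; this is exactly where the hypothesis $|A| \ge \max(B)+1$ (rather than just $|A| \ge \max(B)$) gets used, since it provides the one extra partial sum of $A$ needed to force the collision away from the endpoints.
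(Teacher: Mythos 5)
Your overall strategy --- bounding $|A|$ directly by $\max(B)$ via partial sums and a pigeonhole count, with the symmetric bound giving $|B|\le\max(A)$ --- is a genuinely different route from the paper, which instead inducts on $\max(A)+\max(B)$ using the $(a_i,b_j)$-derivation machinery of Lemmas~\ref{lem:1} and~\ref{lem:2}. When completed, your route yields a much shorter, induction-free proof. But as written the proposal has a real gap at its only substantive step. You assert that a counting argument ``forces a collision $s_i=t_j$ with $0<i<n$ and $0<j<|B|$,'' i.e.\ a coincidence of \emph{prefix} sums, but no such count is supplied and none exists for an arbitrary ordering: take $A=\{10,10,10,10\}$ and $B$ consisting of thirteen $3$'s and one $1$, ordered $3,3,3,3,1,3,\dots,3$; then $|A|=4>3=\max(B)$, the internal prefix sums of $A$ are $10,20,30$, and those of $B$ are $3,6,9,12,13,16,\dots,37$, with no coincidence. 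Choosing ``the right ordering'' to create a prefix collision cannot be done non-circularly, since knowing which elements to put first is tantamount to already knowing the reducing subsets. What the pigeonhole actually forces is a different and weaker-looking statement that nonetheless suffices: for each $i\in\{1,\dots,n\}$ let $j(i)$ be minimal with $t_{j(i)}\ge s_i$ and set $d_i=t_{j(i)}-s_i$; then $0\le d_i\le b_{j(i)}-1\le m-1$, so if $n\ge m+1$ two indices $i<i'$ satisfy $d_i=d_{i'}$, whence $s_{i'}-s_i=t_{j(i')}-t_{j(i)}$ and the \emph{consecutive blocks} $\{a_{i+1},\dots,a_{i'}\}$ and $\{b_{j(i)+1},\dots,b_{j(i')}\}$ have equal sums; both are nonempty and proper because $i\ge 1$ and $j(i)\ge 1$. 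Your proposal never identifies this offset invariant, and the prefix-collision target it commits to is the wrong one, so the core of the argument is missing even though the correct completion lies very close to what you describe.

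Two smaller points. Your equality-case analysis for the consequence is also off: from $|A|=|B|=k$ you conclude that $A$ and $B$ are both the constant multiset $\{k,\dots,k\}$, but $\max(A)=k$ does not make $A$ constant. What the inequalities actually give is $\max(A)=\max(B)=k$, hence $k\in A\cap B$ with $|A|=|B|=k\ge 2$, and then $\{k\}\subset A$ and $\{k\}\subset B$ already witness reducibility --- your parenthetical ``delete one $k$ from each'' is the right idea, but the intermediate claim is a non sequitur. Finally, your worry about negative elements is moot: as you yourself note, the statement is already formulated for pairs of multisets of positive integers, so all partial sums are increasing from the start.
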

One may naturally ask which $k-$irreducible pairs  $\{A,B\}$ achieve the 
maximum possible length. We answer this question in the the following corollary.
\begin{corollary}\label{cor:1}
Let $k>1$ be an integer. A $k-$irreducible pair $\{A,B\}$ has (maximum possible) 
length $\ell(A,B)=2k-1$ if and only if $\{A,B\}$ is the pair shown in~$(2)$.
\end{corollary}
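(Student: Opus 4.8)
\medskip

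\noindent\textbf{Proof proposal for Corollary~\ref{cor:1}.}
The ``if'' part is the computation at $(2)$: the displayed pair is $k$-irreducible of length $(k-1)+k=2k-1$. For ``only if'', let $\{A,B\}$ be a $k$-irreducible pair with $|A|+|B|=2k-1$. By Theorem~\ref{thm:1}, $|A|\le \max(B)$ and $|B|\le\max(A)$, while $k$-irreducibility gives $\max(A),\max(B)\le k$; hence
\[2k-1=|A|+|B|\le\max(B)+\max(A)\le 2k.\]
If $\max(A)+\max(B)=2k$ then $\max(A)=\max(B)=k$, so $k\in A\cap B$, and by the observation in Section~\ref{sec:intro} this forces $A=B=\{k\}$, of length $2<2k-1$ for $k>1$---a contradiction. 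Hence $\max(A)+\max(B)=2k-1$, which forces $\{\max(A),\max(B)\}=\{k,k-1\}$; together with $|A|\le\max(B)$, $|B|\le\max(A)$ and $|A|+|B|=\max(A)+\max(B)$ this forces $|A|=\max(B)$ and $|B|=\max(A)$. Interchanging $A$ and $B$ if necessary, we may assume $\max(A)=|B|=k$ and $\max(B)=|A|=k-1$. It now suffices to prove that every term of $A$ equals $k$: then $\Sigma A=k(k-1)=\Sigma B$ with $|B|=k$ terms each at most $k-1$, forcing every term of $B$ to equal $k-1$, i.e.\ $\{A,B\}$ is exactly the pair in $(2)$.

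To show $A=\{k,\dots,k\}$ I would work in the ``saturated'' case of the argument behind Theorem~\ref{thm:1}. Interleave the $N=2k-1$ signed terms greedily: from a nonnegative partial sum take an unused term of $B$ (a negative step), from a negative partial sum take an unused term of $A$ (a positive step). As in the proof of Theorem~\ref{thm:1} this is always possible, the partial sums $P_0=0,P_1,\dots,P_{N-1}$ are pairwise distinct by irreducibility and lie in $\{-(k-1),\dots,k-1\}$; moreover the $|B|$ negative steps are taken precisely from the nonnegative partial sums, so exactly $|B|=k$ of $P_0,\dots,P_{N-1}$ are nonnegative and exactly $|A|=k-1$ are negative. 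Since these counts equal $|\{0,\dots,k-1\}|$ and $|\{-(k-1),\dots,-1\}|$, both containments are equalities and $\{P_0,\dots,P_{N-1}\}=\{-(k-1),\dots,k-1\}$. Equivalently, the map sending each value to the next partial sum is a Hamiltonian cycle on $\{-(k-1),\dots,k-1\}$ in which the arc out of each nonnegative vertex strictly decreases the value (using a term of $B$) and the arc out of each negative vertex strictly increases it (using a term of $A$). An arc into $k-1$ cannot leave a nonnegative vertex (that would decrease below $k-1$), so it leaves some $-j$ with $-j+a=k-1$; then $a=k-1+j\le k$ forces $j=1$ and $a=k$. Dually the arc into $-(k-1)$ must be the term $k-1$ of $B$ used out of $0$. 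In particular $k\in A$, $k-1\in B$, and the cycle contains $-1\to k-1$ and $0\to-(k-1)$.

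It remains to show that this Hamiltonian cycle is forced to be the zigzag $0\to-(k-1)\to 1\to-(k-2)\to\cdots\to -1\to(k-1)\to 0$, equivalently that every term of $A$ is used as a step of size $k$; this propagation is the step I expect to be the main obstacle. My plan is to peel the cycle from the outside in: having pinned the arcs at $\pm(k-1)$ (and noting, e.g., that the out-arc of $-1$ is already spent on $-1\to k-1$), one lists the few surviving candidates for the arc into $k-2$ and into $-(k-2)$ and excludes all but the zigzag one, then repeats for $k-3,-(k-3)$, and so on down to $\pm1$. The leverage is that irreducibility has two clean consequences here---no subset of $A$ sums to a term of $B$, and no subset of $B$ sums to a term of $A$, since any such subset of $A$ or of $B$ is automatically proper---which kill the spurious branches. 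I stress that this is where the full strength of irreducibility is needed: unlike the inequality of Theorem~\ref{thm:1}, which used only distinctness of partial sums under a single ordering, the uniqueness statement is a rigidity phenomenon and must invoke the non-contiguous case as well. As a sanity check I would first verify the conclusion by hand for $k=2,3,4$ before committing to the general induction on the cycle.
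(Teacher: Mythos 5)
Your reduction is sound up to a point: the ``if'' direction, the chain $2k-1=|A|+|B|\le\max(A)+\max(B)\le 2k$, the elimination of $\max(A)=\max(B)=k$ via $A\cap B\ne\emptyset$, the forced equalities $|A|=\max(B)=k-1$ and $|B|=\max(A)=k$, and the observation that it suffices to show every element of $A$ equals $k$ are all correct. The greedy interleaving is also set up correctly (note, though, that the paper proves Theorem~\ref{thm:1} by a derivation-based induction, not by partial sums, so this machinery is yours and must stand on its own), and it does legitimately yield that the partial sums exhaust $\{-(k-1),\dots,k-1\}$ and that the arcs at $\pm(k-1)$ are forced, hence $k\in A$ and $k-1\in B$.

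The genuine gap is exactly the step you flag yourself: forcing the Hamiltonian cycle to be the zigzag. This is not a routine peeling. Already for $k=3$ the cycle $0\to-2\to-1\to 2\to 1\to 0$ (steps $-2,+1,+3,-1,-1$, i.e.\ $A=\{3,1\}$, $B=\{2,1,1\}$) satisfies every constraint you have actually established -- distinct partial sums in the right range, correct counts of nonnegative and negative states, and the forced arcs $-1\to k-1$ and $0\to -(k-1)$ -- and is only excluded because $\{1\}\subset A$ and $\{1\}\subset B$ witness reducibility. So the inductive ``list the surviving candidates and exclude all but one'' step must, at each level, invoke non-contiguous zero-sum witnesses, and you give no argument that this always succeeds; the number of surviving candidates is not controlled, and no induction hypothesis is formulated. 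As written, the only-if direction is a plan, not a proof. For comparison, the paper finishes in a few lines by reusing its derivation machinery: assuming $\{A,B\}$ is not the extremal pair, it performs an $(a_1,b_1)$-derivation ($\min(x_1,y_1)$ times) and applies Theorem~\ref{thm:1} to the derived $k$-irreducible pair $\{C,D\}$, obtaining either $|A|\le b_2<k-1$ or $|B|\le b_1=k-1$, a contradiction with $|A|=k-1$, $|B|=k$; the only delicate point (that $b_1-a_2\ge x_1$) is settled by a single explicit reducibility witness. You would do better to abandon the cycle rigidity argument and argue along those lines, or at minimum supply a complete induction for the peeling step.
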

A {\em zero-sum} sequence is a sequence of nonzero terms that sum to $0$.
A zero-sum sequence is said to be {\em irreducible} if it does not contain
a proper nonempty zero-sum subsequence. 
Given a zero-sum sequence $\tau$ with elements from $[-k,k]$, 
let $A_\tau$ be the multiset of all positive integers from $\tau$ and $B_\tau$ be the 
multiset containing the absolute values of all negative integers from $\tau$. Then the sequence 
$\tau$ is irreducible if and only if the pair $\{A_\tau,B_\tau\}$ is irreducible. 

Let $k$ be a positive integer, and let $[-k,k]$ denote the set of all nonzero 
integers between $-k$ and $k$. Then the number $\ell(k)$ defined in~\eqref{def:lk} is also equal to 
the smallest integer $\ell$ such that any zero-sum sequence with  elements from $[-k,k]$ 
and length greater than $\ell$ contains a proper nonempty zero-sum subsequence. Moreover,
it follows from Theorem~\ref{thm:1} that $\ell(k)=2k-1$.

Let $G$ be a finite (additive) abelian group of order $n$. The {\em Davenport constant} of $G$,
denoted by $D(G)$, is the smallest integer $m$ such that any sequence of 
elements from $G$ with length $m$ contains a nonempty zero-sum subsequence.
Another key constant, $E(G)$, is the smallest integer $m$ such that any sequence of 
 elements from $G$ with length $m$ contains a zero-sum subsequence
of length exactly $n$. The constant $E(G)$ was inspired by the well-known result 
of Erd\"os, Ginzburg, and Giv~\cite{EGZ}, which states that $E(\Z/n\Z)=2n-1$. Subsequently, 
Gao~\cite{G} proved that $E(G)=D(G)+n-1$. There is a rich literature of research 
dealing with the constants $D(G)$ and $E(G)$. We refer the interested reader to the survey papers 
of Caro~\cite{C} and Gao--Geroldinger~\cite{GG} for further information.

By rephrasing our main theorem using the language of zero-sum sequence, we can view 
it as a zero-sum theorem.
Whereas zero-sum sequences are traditionally studied for finite abelian groups
such as $\Z/n\Z$, we consider in this paper zero-sum sequences over the infinite group $\Z$. 

The rest of the paper is structured as follows. In Section~\ref{sec:main}, we prove 
our main results (Theorem~\ref{thm:1} and Corollary~\ref{cor:1}), and in Section~\ref{sec:conc}, 
we end with some concluding remarks. 

\section{Proofs of Theorem~\ref{thm:1} and Corollary~\ref{cor:1}}\label{sec:main}\
Suppose, we are given a $k-$irreducible pair $\{A,B\}$. We may assume that 
 $A=\{x_1\cdot a_1,x_2\cdot a_2,\ldots, x_n\cdot a_n\}$ and 
$B=\{y_1\cdot b_1,y_2\cdot b_2,\ldots, y_m\cdot b_m\}$, where 
the $a_i$'s and $b_j$'s are all positive integers 
such that $1\leq a_i,b_j\leq k$ for $1\leq i\leq n$, $1\leq j\leq m$.
We also assume that the $a_i$'s (resp. $b_j$'s) are pairwise distinct.
Moreover, $x_i>0$ and $y_j>0$ are the multiplicities of $a_i$ and $b_j$ respectively.
We also assume that the $a_i$'s (resp. $b_j$'s) are pairwise distinct.
For any pair $(a_i,b_j)$, let
\begin{enumerate}
\item $C$ be the multiset obtained from $A$ by: $(i)$ removing one copy of $a_i$, 
and $(ii)$ introducing one copy of $a_i-b_j$ if $a_i>b_j$.
\item $D$ be the multiset obtained from $B$ by: $(i)$ removing one copy of $b_j$, 
and $(ii)$ introducing one copy of $b_j-a_i$ if $b_j>a_i$.
\end{enumerate}
We say that $\{C,D\}$ is {\em $(a_i,b_j)$-derived}  from $\{A,B\}$.
We also call the above process an {\em $(a_i,b_j)$-derivation}.
Consider the integers $p>0$, $q>0$, and $z_{ij}\geq 0$ for $p\leq i\leq q$ and $u\leq j\leq v$.
We say that $\{C,D\}$ is $\prod_{i=p}^q\prod_{j=u}^v (a_i,b_j)^{z_{ij}}$-derived
from $\{A,B\}$ if it is obtain by performing on $\{A,B\}$ an $(a_i,b_j)$-derivation 
$z_{ij}$ times for each  $(i,j)$  pair. (If $z_{ij}=0$, then we simply do not perform 
the corresponding $(a_i,b_j)$-derivation.)

We illustrate this operation with the following example. Let 
$A=\{3\cdot 7,2\cdot 1\}=\{7,7,7,1,1\}$ and $B=\{3\cdot 6,5\}=\{6,6,6,5\}$.
Then $\{A,B\}$ is $7$-irreducible.  A $(7,6)^2(7,5)$-derivation of $(A,B)$ yields 
the pair $\{C,D\}$, where $C=\{2,1,1,1,1\}$ and $D=\{6\}$. Note that $\{C,D\}$ 
is $6$-irreducible (thus, $7$-irreducible).

In general, the order in which the derivation is done makes a difference.
For example, if $A=\{5,5\}$ and $B=\{2,2,2,2,2\}$, then we can do a $(5,2)$ derivation 
followed by a $(3,2)$-derivation on $\{A,B\}$, but not in reverse order.
However, all the derivation used in our proofs can be done in any order.

We will use the following lemma.
\begin{lemma}\label{lem:1}\
Let $A=\{x_1\cdot a_1,x_2\cdot a_2,\ldots, x_n\cdot a_n\}$ and 
$B=\{y_1\cdot b_1,y_2\cdot b_2,\ldots, y_m\cdot b_m\}$ be multisets, where 
the $a_i$'s and $b_i$'s are all positive integers 
such that $1\leq a_i,b_j\leq k$ for $1\leq i\leq n$, $1\leq j\leq m$.
Moreover, $x_i>0$ and $y_j>0$ are the multiplicities of $a_i$ and $b_j$ respectively.
Suppose that $\{A,B\}$ is a $k$-irreducible pair with length $|A|+|B|>2$.

\n $(i)$  If $\{C,D\}$ is $(a_i,b_j)$-derived from $(A,B)$, then it is $k-$irreducible.

\n $(ii)$ Let $p>0$, $q>0$, and $z_{ij}\geq 0$ for
$p\leq i\leq q$ and $u\leq j\leq v$, be integers.
Assume that $\sum_{j=u}^v z_{ij}\leq x_i$ and  $\sum_{i=p}^q z_{ij}\leq y_j$.
If $\{C,D\}$ is $\prod_{i=p}^q\prod_{j=u}^v (a_i,b_j)^{z_{ij}}$-derived 
from $\{A,B\}$, then it is $k-$irreducible.
\end{lemma}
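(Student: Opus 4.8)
\textbf{Proof proposal for Lemma~\ref{lem:1}.}

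The plan is to prove part $(i)$ first and then bootstrap to part $(ii)$. For part $(i)$, fix the pair $(a_i,b_j)$ and let $\{C,D\}$ be the $(a_i,b_j)$-derived pair. One checks immediately that $\Sigma C = \Sigma D$: removing $a_i$ from $A$ and $b_j$ from $B$ changes both sums by the same amount only after we reintroduce $a_i-b_j$ (or $b_j-a_i$); a short case analysis on whether $a_i>b_j$, $a_i<b_j$, or $a_i=b_j$ shows $\Sigma C=\Sigma A-b_j=\Sigma B-a_i=\Sigma D$ in the first case (and symmetrically in the others). Also $\max(C\cup D)\le k$ since the only new elements are $|a_i-b_j|<k$. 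The substance is showing $\{C,D\}$ is \emph{irreducible}. Suppose not: there are nonempty proper multisubsets $C'\subset C$, $D'\subset D$ with $\Sigma C'=\Sigma D'$. I would lift $C'$ and $D'$ back to multisubsets $A'\subseteq A$ and $B'\subseteq B$ by replacing an occurrence of $a_i-b_j$ in $C'$ (if present) by $a_i$ in $A'$, and symmetrically; when $a_i>b_j$ and $a_i-b_j\in C'$, set $A'=(C'\setminus\{a_i-b_j\})\cup\{a_i\}$ and $B'=D'\cup\{b_j\}$, so that $\Sigma A'=\Sigma C'+b_j=\Sigma D'+b_j=\Sigma B'$. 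The remaining cases (the new element not used, or $a_i<b_j$, or $a_i=b_j$) are handled the same way, always adding back $b_j$ to one side or $a_i$ to the other so that the equality of partial sums is preserved. This produces $A'\subseteq A$, $B'\subseteq B$ with $\Sigma A'=\Sigma B'$, and by irreducibility of $\{A,B\}$ this forces $A'=A$ and $B'=B$ (or the degenerate situation). Then one must check that $C'=C$ and $D'=D$ after all — i.e. that the lift was onto — which is where the length hypothesis $|A|+|B|>2$ and the structure of irreducibility (that $A\cap B=\emptyset$ unless $|A|=|B|=1$) are used to rule out the borderline degenerate cases. So the main obstacle is this bookkeeping: carefully matching up which copies of $a_i-b_j$, $a_i$, $b_j$ appear in the subsets and confirming that a proper sub-equality of $\{C,D\}$ really does pull back to a proper sub-equality of $\{A,B\}$.

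For part $(ii)$, I would argue by induction on the total number of derivations $N=\sum_{i,j}z_{ij}$. The base case $N=0$ is trivial and $N=1$ is part $(i)$. For the inductive step, peel off one $(a_i,b_j)$-derivation: the multiplicity constraints $\sum_{j}z_{ij}\le x_i$ and $\sum_i z_{ij}\le y_j$ guarantee that at every intermediate stage there is still a copy of $a_i$ (resp.\ $b_j$) available to remove, so the derivation is well-defined; and the intermediate pair, being obtained by $N-1$ derivations satisfying the same type of constraints, is $k$-irreducible by the inductive hypothesis, hence part $(i)$ applies to the last step — provided the intermediate pair still has length $>2$. The one subtlety is that a derivation can shrink the length (by $1$ when $a_i=b_j$, or when cancellation merges terms), so I would note that we only need $k$-irreducibility, and an irreducible pair of length $\le 2$ is automatically irreducible (it is of the form $\{a\},\{a\}$); alternatively, track that the claimed final pair $\{C,D\}$ in our applications always has length $>2$, and if some intermediate pair drops to length $2$ the conclusion is immediate anyway. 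Thus the induction goes through, with part $(i)$ doing all the real work at each step.
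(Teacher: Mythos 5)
Your proposal is correct and follows essentially the same route as the paper: a reducing witness $\{C',D'\}$ for the derived pair is converted into a reducing witness for $\{A,B\}$, and part $(ii)$ is obtained by iterating part $(i)$ under the multiplicity constraints (the paper passes to the complements $\overline{C}'=C-C'$, $\overline{D}'=D-D'$ so that one of the two subset pairs avoids the new element $a_i-b_j$, whereas you lift directly by putting $a_i$ and $b_j$ back; both work). One simplification to the end of your part $(i)$: no ``onto''/degeneracy check is needed, because in the case $a_i>b_j$ one has $|A'|\le |C'|<|C|=|A|$ and $|B'|\le |D'|+1\le |D|=|B|-1<|B|$ (and symmetrically when $a_i<b_j$), so the lifted sets are automatically nonempty \emph{proper} submultisets of $A$ and $B$ with equal sums, contradicting irreducibility of $\{A,B\}$ outright.
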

\begin{proof}
We first prove $(i)$. Without loss of generality, we may assume 
that $a_i>b_j$ since the proof is similar for $a_i<b_j$. Then
\begin{equation}\label{eq:lem.1}
C=\left(A-\{a_i\}\right)\cup \{a_i-b_j\} \mbox{ and } D=B-\{b_j\},
\end{equation}
are nonempty since $|A|+|B|>2$. Since $\{A,B\}$ is irreducible, we have
\[ \Sigma A=\Sigma B\Rightarrow  \Sigma C=\Sigma A-a_i+(a_i-b_j)=\Sigma B -b_j=\Sigma D.\]
Assume that $\{C,D\}$ is reducible. Then, there exist nonempty proper   subsets
 $C'\subset C$ and $D'\subset D$ such that $\Sigma C'=\Sigma D'$. 
Let $\overline{C}'=C-C'$ and $\overline{D}'=D-D'$.
Then $\overline{C}'\subset C$ and $\overline{D}'\subset D$ are also nonempty proper 
subsets that satisfy $\Sigma \overline{C}'=\Sigma \overline{D}'$. However, it follows 
from the definition of $C$ in~\eqref{eq:lem.1} that either $C'$ or $\overline{C}'$ is a proper 
subset of $A$, since $a_i-b_j$ cannot be in both $C'$ and $\overline{C}'$. It also follows from 
the definition of $D$ in~\eqref{eq:lem.1} that 
both $D'$  and $\overline{D}'$ are proper subsets of $B$. Thus, either the 
subset pair $\{C',D'\}$ or $\{\overline{C}',\overline{D}'\}$ is a witness to the reducibility 
of $\{A,B\}$. This contradicts the fact that $\{A,B\}$ is irreducible. Hence, if $\{A,B\}$ is 
irreducible, then $\{C,D\}$ is also irreducible. In addition, it follows from~\eqref{eq:lem.1} 
that $\max(C)\leq \max(A)$ and $\max(D)\leq \max(B)$. Hence, if $\{A,B\}$ is $k$-irreducible, then
$\{C,D\}$ is also $k$-irreducible.

To prove $(ii)$, observe that we can apply $(i)$ recursively by performing (in any order) on $\{A,B\}$ 
an $(a_i,b_j)$-derivation $z_{ij}$ times for each $(i,j)$ pair. The conditions 
on the $z_{ij}$'s guarantee that there are enough pairs $(a_i,b_j)$ in $A\times B$ to independently
perform all the $(a_i,b_j)$-derivations for $p\leq i\leq q$ and $u\leq j\leq v$.
\end{proof}

We will also need the following basic lemma.
\begin{lemma}\label{lem:2}
Let  $x_i$ and $y_j$ be positive integers, where $1\leq i\leq n$  and  $1\leq j\leq m$.
If $t<m$ be a positive integer such that 
\[\sum_{j=1}^t y_j\leq \sum_{i=1}^n x_i \mbox{ and } \sum_{j=1}^{t+1} y_j>\sum_{i=1}^n x_i,\]
then there exist integers $z_{ij}\geq0 $, $1\leq i\leq n$ and $1\leq j\leq t+1$, 
such that 
\[\sum_{i=1}^n z_{ij}=y_j\mbox{ for $1\leq j\leq t$, }\;
z_{i,t+1}=x_i-\sum_{j=1}^t z_{ij}\geq0, \mbox{ and } y_{t+1}>\sum_{i=1}^n z_{i,t+1}.\]
\end{lemma}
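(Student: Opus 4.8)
\textbf{Proof proposal for Lemma~\ref{lem:2}.}

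The plan is to think of this as a bipartite transportation (bin-packing) problem: we have ``supplies'' $x_1,\ldots,x_n$ summing to $X:=\sum_i x_i$, and ``demands'' $y_1,\ldots,y_t$ which we must satisfy exactly, using a nonnegative array $(z_{ij})$ whose $i$-th row sums to at most $x_i$. The leftover in row $i$ is then pushed into column $t+1$, i.e. $z_{i,t+1}:=x_i-\sum_{j=1}^t z_{ij}$, and we must check $\sum_i z_{i,t+1}<y_{t+1}$. Since $\sum_{j=1}^t y_j\le X$, the total demand in the first $t$ columns does not exceed the total supply, so a feasible allocation exists; and then $\sum_i z_{i,t+1}=X-\sum_{j=1}^t y_j$, which is $<y_{t+1}$ precisely by the hypothesis $\sum_{j=1}^{t+1}y_j>X$. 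So the only real content is producing a nonnegative integer array with prescribed column sums $y_1,\ldots,y_t$ and row sums bounded by the $x_i$.

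For that I would use a direct greedy (``water-filling'') construction rather than invoke a max-flow integrality theorem. Order the columns $1,2,\ldots,t$ and fill them one at a time; maintain the remaining capacity $r_i$ of each row, initially $r_i=x_i$. To fill column $j$ with total $y_j$, distribute $y_j$ units among the rows arbitrarily subject to $z_{ij}\le r_i$ (for instance, fill rows $1,2,\ldots$ in order, putting $\min(r_i,\text{demand still unmet})$ into row $i$), then decrement $r_i$ by $z_{ij}$. This step succeeds as long as the current total remaining capacity $\sum_i r_i$ is at least $y_j$. Initially $\sum_i r_i=X\ge\sum_{j=1}^t y_j$, and after processing columns $1,\ldots,j$ the remaining total capacity is exactly $X-\sum_{s=1}^j y_s\ge X-\sum_{s=1}^t y_s\ge 0$, which is $\ge y_{j+1}$ whenever $j+1\le t$ (again because $\sum_{s=1}^t y_s\le X$). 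Hence every column $1,\ldots,t$ can be filled, all $z_{ij}$ are nonnegative integers, the row constraints $\sum_{j=1}^t z_{ij}\le x_i$ hold by construction, and defining $z_{i,t+1}=x_i-\sum_{j=1}^t z_{ij}\ge 0$ we get $\sum_i z_{i,t+1}=X-\sum_{j=1}^t y_j<y_{t+1}$ as required.

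I do not expect a serious obstacle here; the statement is an elementary counting/feasibility fact. The only point demanding a little care is bookkeeping the invariant ``remaining capacity after $j$ columns equals $X-\sum_{s\le j}y_s$'' and checking it stays nonnegative through column $t$ (which uses $\sum_{j=1}^t y_j\le X$) so that the greedy never gets stuck, and then reading off the final inequality from $\sum_{j=1}^{t+1}y_j>X$. Integrality is automatic because at each step we only add integers. If one prefers to avoid the explicit greedy, an alternative is induction on $t$: the case $t=1$ is immediate since $y_1\le X$ lets us split $y_1$ across the rows within their capacities, and the inductive step fills column $t$ using the fact that after columns $1,\ldots,t-1$ the remaining capacities still sum to $X-\sum_{s=1}^{t-1}y_s\ge y_t$. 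Either route gives the claim.
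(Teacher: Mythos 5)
Your proof is correct and is essentially the paper's own argument: the paper distributes the $\sum_{j=1}^t y_j$ ``marbles'' of colors $1,\dots,t$ into bins of capacities $x_1,\dots,x_n$ and lets the color-$(t+1)$ marbles top off the slack, which is exactly your transportation setup with the leftover row capacities defining $z_{i,t+1}$. Your explicit greedy column-filling with the invariant on remaining capacity just makes precise the distribution step that the paper asserts informally.
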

\begin{proof}
For each $j$, $1\leq j\leq t+1$,  consider $y_j$ marbles of color $j$.
For each $i$, $1\leq j\leq n$,  consider a bin with capacity $x_i$ (i.e., 
it can hold $x_i$ marbles).
Since $p=\sum_{j=1}^t y_j\leq \sum_{i=1}^n x_i=q$,  we can 
distribute all the $p$ marbles  into the $n$ bins (with total capacity $q$) without 
 exceeding the capacity of any given bin.
 Since $p+y_{t+1}=\sum_{j=1}^{t+1} y_j> q$,  we can use the additional $y_{t+1}$ 
marbles to top off  the bins that were not already full.

Now define $z_{ij}$  to be the number of marbles in bin $i$ that have color $j$.
Then the  $z_{ij}$'s  satisfy the required properties.
\end{proof}

\bs
We now prove our main theorem.
\begin{proof}[Proof of Theorem~\ref{thm:1}]\

Let $\{A,B\}$ be a $k-$irreducible pair. We can write
$A=\{x_1\cdot a_1,x_2\cdot a_2,\ldots, x_n\cdot a_n\}$ and 
$B=\{y_1\cdot b_1,y_2\cdot b_2,\ldots, y_m\cdot b_m\}$, where 
the $a_i$'s and $b_i$'s are all positive integers 
such that $1\leq a_i,b_j\leq k$ for $1\leq i\leq n$, $1\leq j\leq m$.
Moreover, $x_i>0$ and $y_j>0$ are the multiplicities of $a_i$ and $b_j$ respectively.
Consequently, we may assume that the $a_i$'s (resp. $b_j$'s) are pairwise distinct.
Without loss of generality, we may also assume that 
\begin{equation}\label{eq:gen-ass}
a_1>\ldots>a_n \mbox{ and } b_1>\ldots>b_m.
\end{equation}

We shall prove by induction on $r=\max(A)+\max(B)\geq 2$ that 
\begin{equation}\label{eq:ind}
|A|\leq \max(B)\quad \mbox{ and }\quad  |B|\leq \max(A).
\end{equation}
If $r=2$, then $k\leq 2$ and the only possible irreducible pair is $\{\{1\},\{1\}\}$.
Thus, the inductive statement~\eqref{eq:ind} is clearly true.

If $a_i=b_j$ for some pair $(i,j)$, then $A=\{a_i\}=B$. (Otherwise, $A'=\{a_i\}\subset A$  and 
$B'= \{a_i\}\subset B$  are  nonempty proper subsets satisfying $\Sigma A'=\Sigma B'$, 
which contradicts the irreducibility of $\{A,B\}$.) Moreover, $|A|=|B|=1\leq a_i=\max(A)=\max(B)$
holds. Since $k>1$, we further obtain $\ell(A,B)=|A|+|B|=2<2k-1$.

So we can assume that $A\cap B=\emptyset$. Without loss of generality, we may also 
assume that $\max(A)=a_1>b_1=\max(B)$.

\bs
Suppose that the theorem holds for all $k-$irreducible pairs $\{C,D\}$ with 
$2\leq r'=\max(  C)+\max( D)<r$. To prove the inductive step, we consider two parts. 

\bs\n{\bf Part I:}
In this part, we show  $|A|\leq \max(B)$ . We consider two cases.

\bs\n {\bf Case 1:}  $y_1>x_1$.

Since $y_1>x_1$, we can perform an $(a_1,b_1)^{x_1}$-derivation from $\{A,B\}$ to 
obtain (by Lemma~\ref{lem:1}) the $k$-irreducible pair $\{C,D\}$, where
\[C=\{x_1\cdot (a_1-b_1),x_2\cdot a_2,\ldots,x_n\cdot a_n\} \]
and $D=\{ (y_1-x_1)\cdot b_1,y_2\cdot b_2,\ldots,y_m\cdot b_m\}$.

Since $r'=\max(C)+\max(D)=\max\{a_1-b_1,a_2\} +b_1<r$, it follows from 
the induction hypothesis that 
\begin{equation}\label{eq:c1}
|C|=\sum_{i=1}^n x_i\leq \max(D)=b_1.
\end{equation}
It follows from~\eqref{eq:c1} that $|A|=\sum_{i=1}^n x_i=|C|\leq b_1$ as required.

\bs\n {\bf Case 2:}  $y_1\leq x_1$.

Since $y_1\leq x_1$, we can perform an $(a_1,b_1)^{y_1}$-derivation from 
$\{A,B\}$ to obtain (by Lemma~\ref{lem:1}) the $k$-irreducible pair $\{C,D\}$, where
\[C=\{(x_1-y_1)\cdot a_1,y_1\cdot (a_1-b_1),x_2\cdot a_2,\ldots,x_n\cdot a_n\}\]
and $D=\{y_2\cdot b_2,\ldots,y_m\cdot b_m\}$.

Since $r'=\max(C)+\max(D) \leq a_1+b_2<r$, it follows from the induction 
hypothesis that 
\begin{equation}\label{eq:c2}
|C|=(x_1-y_1)+y_1+\sum_{i=2}^n x_i=\sum_{i=1}^n x_i\leq \max(D)= b_2 .
\end{equation}
It follows from~\eqref{eq:c2} that $|A|=\sum_{i=1}^n x_i=|C|\leq b_2<b_1$. This 
concludes the first part of the proof.

\bs\n {\bf Part II:} In this part, we show that $|B|\leq \max(A)=a_1$.
Assume that $|B|>a_1$. Then since $a_1>b_1$ and $|A|\leq  b_1$ (by Part I), we obtain $|B|>|A|$.  
We now consider the cases $a_n>b_1$ and $b_1>a_n$. (Recall that 
$b_1\not=a_n$ since $A\cap B=\emptyset$.)

\bs\n{\bf Case 1:} $a_n>b_1$.

Then it follows from our general assumption~\eqref{eq:gen-ass} that 
\[a_1>\ldots>a_n>b_1>\ldots>b_m.\]
We consider the following two subcases.

\bs\n{\bf Case 1.1:} $y_1>\sum_{i=1}^n x_i$. 

Then, we can perform an $\prod_{i=1}^n(a_i,b_1)^{x_i}$-derivation
from $\{A,B\}$ to obtain (by Lemma~\ref{lem:1})  the $k$-irreducible pair $\{C,D\}$, where
\[C=\{x_1\cdot (a_1-b_1),x_2\cdot (a_2-b_1),\ldots,x_n\cdot (a_n-b_1)\},\]
and 
\[D=\Big\{\Big(y_1-\sum_{i=1}^n x_i\Big)\cdot b_1,y_2\cdot b_2,\ldots,y_m\cdot b_m\Big\}.\]
Since $r'=\max(C)+\max(D)=(a_1-b_1)+b_1 <r$, it follows from the induction hypothesis that 
\begin{equation}\label{eq:c1.1}
|C|=\sum_{i=1}^n x_i\leq \max(D)\mbox{ and }|D|=\sum_{j=1}^m y_j-\sum_{i=1}^n x_i\leq \max(C).
\end{equation}
Thus, it follows from~\eqref{eq:c1.1}
\begin{equation*}
|B|=\sum_{j=1}^m y_j=|C|+|D|\leq \max(C)+\max(D)= (a_1-b_1)+b_1=a_1.
\end{equation*}

\bs\n{\bf Case 1.2:} $y_1\leq \sum_{i=1}^n x_i$.

Recall from the first paragraph in Part II that 
\[\sum_{j=1}^m y_j=|B| > |A|=\sum_{i=1}^n x_i.\] 
Consequently, the above inequality together with  $y_1\leq \sum_{i=1}^n x_i$ imply 
that  there exists an integer $t$, $1\leq t<m$, such that
\begin{equation}\label{eq:t}
\sum_{j=1}^t y_j\leq \sum_{i=1}^n x_i \mbox{ and } \sum_{j=1}^{t+1} y_j>\sum_{i=1}^n x_i.
\end{equation}
Then it follows from~Lemma~\ref{lem:2} that there exist integers $z_{ij}\geq0 $, $1\leq i\leq n$ 
and $1\leq j\leq t+1$, such that 
\[\sum_{i=1}^n z_{ij}=y_j\mbox{ for $1\leq j\leq t$, }\;
z_{i,t+1}=x_i-\sum_{j=1}^t z_{ij}\geq0, \mbox{ and } y_{t+1}>\sum_{i=1}^n z_{i,t+1}.\]
Thus, we can perform a $\prod_{i=1}^{n}\prod_{j=1}^{t+1}(a_i,b_j)^{z_{ij}}$-derivation from $\{A,B\}$ 
to obtain (by Lemma~\ref{lem:1})  the $k$-irreducible pair $\{C,D\}$, where
\begin{multline*}
C=\{z_{11}\cdot (a_1-b_1),\ldots,z_{1,t+1}\cdot (a_1-b_{t+1}),\ldots,\\
z_{i1}\cdot (a_i-b_1),\ldots,z_{i,t+1}\cdot (a_i-b_{t+1}),\ldots,\\
z_{n1}\cdot (a_n-b_1),\ldots,z_{n,t+1}\cdot (a_n-b_{t+1})\},
\end{multline*}
and
\[D=\Big\{\big(y_{t+1}-\sum_{i=1}^n z_{i,t+1}\big)\cdot b_{t+1},y_{t+2}\cdot b_{t+2},\ldots,y_{m}\cdot b_m\Big\}.\]
Since $a_1>\ldots>a_n>b_1>\ldots>b_m$, it follows that 
\[\max(C)\leq \max(A)-\min\limits_{1\leq j\leq t+1}b_j=a_1-b_{t+1} \mbox{ and } \max(D)=b_{t+1}.\]
Thus, $r'=\max(C)+\max(D) \leq (a_1-b_{t+1})+b_{t+1} <r$ and it follows from the induction
hypothesis that 
\begin{equation}\label{eq1:c1.2}
|C|=\sum_{j=1}^t \sum_{i=1}^n z_{ij}+\sum_{i=1}^n z_{i,t+1}=\sum_{j=1}^{t}y_j+\sum_{i=1}^n z_{i,t+1}\leq \max(D),
\end{equation}
and 
\begin{equation}\label{eq2:c1.2}
|D|=(y_{t+1}-\sum_{i=1}^n z_{i,t+1})+\sum_{j=t+2}^m y_j\leq \max(C).
\end{equation}
From~\eqref{eq1:c1.2} and~\eqref{eq2:c1.2}, we obtain
\begin{eqnarray*}\label{eq:7}
|B|=\sum_{j=1}^m y_j=|C|+|D|\leq \max(C)+\max(D)\leq a_1-b_{t+1}+b_{t+1}=a_1,
\end{eqnarray*}
as required. 

\bs\n{\bf Case 2:} $b_1>a_n$. 

 Let $s$ be that smallest index such that $b_1>a_s$. Since $a_1>b_1>a_n$, the integer $s$ exists 
 and $2\leq s\leq n$.  We consider the following two subcases.

\bs\n{\bf Case 2.1:} $y_1\leq \sum_{i=s}^n x_n$. 

Since $y_1\leq \sum_{i=s}^n x_n$, there exist integers $z_i\geq 0$, $s\leq i\leq n$ such that 
$x_i\geq z_i$, and $y_1=\sum_{i=s}^nz_i$. We can perform 
an $\prod_{i=s}^n (a_i,b_1)^{z_i}$-derivation from $\{A,B\}$ to obtain (by Lemma~\ref{lem:1}) 
the $k$-irreducible pair $\{C,D\}$, where
\[C=\{x_1\cdot a_1,\ldots,x_{s-1}\cdot a_{s-1},(x_s-z_s)\cdot a_s,\ldots,(x_n-z_n)\cdot a_n\},\]
and 
\[ D=\{z_s\cdot (b_1-a_s),\ldots,z_n\cdot (b_1-a_n),y_2\cdot b_2,\ldots,y_m\cdot b_m\}.\]
Since $r'=\max(C)+\max(D)\leq a_1+\max\{b_1-a_n,b_2\}<r$, it follows from the induction 
hypothesis that 
\begin{equation}\label{eq:2.1}
|D|=\sum_{i=s}^nz_i+\sum_{j=2}^m y_j=y_1+\sum_{j=2}^m y_j=\sum_{j=1}^m y_j\leq \max(C)=a_1.
\end{equation}
Thus, it follows from~\eqref{eq:2.1} that $|B|=\sum_{j=1}^m y_i=|D|\leq a_1$ as required. 

\bs\n{\bf Case 2.2:} $y_1>\sum_{i=s}^n x_n$. 

Since $y_1>\sum_{i=s}^n x_n$,  we can perform an $\prod_{i=s}^n (a_i,b_1)^{x_i}$-derivation 
from $\{A,B\}$ to obtain (by Lemma~\ref{lem:1}) the $k$-irreducible pair $\{A',B'\}$, where
\[A'=\{x_1\cdot a_1,x_2\cdot a_2,\ldots,x_{s-1}\cdot a_{s-1}\},\]
and 
\[B'=\Big\{(y_1-\sum_{i=s}^n x_n)\cdot b_1,x_s\cdot (b_1-a_s),\ldots,x_n\cdot (b_1-a_n),
y_2\cdot b_2,\ldots,y_m\cdot b_m\Big\}.\]
Note that $\max(B')=b_1$. We can now rename the distinct elements of the multiset $B'$ as 
$b_1',\ldots,b'_{m'}$ such that $\max(B')=b_1'>\ldots>b'_{m'}=\min(B')$. Let $y'_j$ be
the multiplicity of $b'_j$ for $1\leq j\leq m'$. We also let $a'_i=a_i$ for $1\leq i\leq s-1=n'$.

Recall from Part I  that $|A|\leq \max(B)=b_1$. Hence, 
\[|A'|=\sum_{i=1}^{s-1} x_i\leq \sum_{i=1}^n x_i=|A|\leq b_1.\]
If $|B'|\leq |A'|$, then $|B|=\sum_{j=1}^m y_j=|B'|\leq |A'|\leq b_1<a_1$, and we are done. 
So, we may assume that $|B'|>|A'|$. Since $a'_{n'}=a_{s-1}>b_1= b'_1$ (owing to the definition 
of $s$ and the fact that $A\cap B=\emptyset$),  it follows that 
\[ a'_1>\ldots>a'_{n'}>b'_1>\ldots>b'_{m'}.\] 
We can now proceed as in Part II (Case~1 ) to infer that 
\[|B'|\leq \max(A')\Longrightarrow |B|=\sum_{j=1}^m y_j=|B'|\leq \max(A')=a_1.\]
This concludes the second part of the proof.

\bs
We conclude from Part I and Part II  that
\[|A|\leq \max(B)=b_1\quad \mbox{ and }\quad  |B|\leq \max(A)=a_1.\]
Moreover, these inequalities imply that
\[\ell(A,B)=|A|+|B|\leq b_1+a_1\leq 2k-1,\]
where the last inequality follows from the fact that $1\leq b_1<a_1\leq k$. Finally, 
since $\ell(k)\geq 2k-1$ (see the example in~\eqref{exp:lb} from Section~\ref{sec:intro}), 
it follows that $\ell(k)=2k-1$.
\end{proof}
We now prove the corollary.
\begin{proof}[Proof of Corollary~\ref{cor:1}]
Let $A=\{x_1\cdot a_1,x_2\cdot a_2,\ldots, x_n\cdot a_n\}$ and 
$B=\{y_1\cdot b_1,y_2\cdot b_2,\ldots, y_m\cdot b_m\}$ be multisets, where 
the $a_i$'s and $b_i$'s are all positive integers 
such that $1\leq a_i,b_j\leq k$ for $1\leq i\leq n$, $1\leq j\leq m$.
Moreover, $x_i>0$ and $y_j>0$ are the multiplicities of $a_i$ and $b_j$ respectively.
We also assume that the $a_i$'s (resp. $b_j$'s) are pairwise distinct.
Without loss of generality, we may also assume that $A\cap B=\emptyset$ and $a_1>b_1$.

Suppose that $\{A,B\}$ is a $k-$irreducible pair such that $\ell(A,B)=2k-1$. 
Then it follows from Theorem~\ref{thm:1} (and the above setup) that
\begin{equation}\label{eq:cor0}
|A|=\max(B)=b_1=k-1 \mbox{ and } |B|=\max(A)=a_1=k. 
\end{equation}
For a proof by contradiction assume that the pair $\{A,B\}$ is different 
from the pair $\left\{\{k\cdot(k-1)\},\{(k-1)\cdot k\}\right\}$. 
We consider two cases.

\bs
\n{\bf Case 1:} $x_1\geq y_1$.

We perform an $(a_1,b_1)^{y_1}$-derivation from 
$\{A,B\}$ to obtain (by Lemma \ref{lem:1}) the $k$-irreducible pair $\{C,D\}$, where
\[C=\{(x_1-y_1)\cdot a_1,y_1\cdot (a_1-b_1),x_2\cdot a_2,\ldots,x_n\cdot a_n\}\]
and $D=\{y_2\cdot b_2,\ldots,y_m\cdot b_m\}$.

Since $a_1>b_1$, $y_1>0$, and $\sum A=\sum B$, we have $m>1$, so that 
$b_2\in D$. Hence, $C$ and $D$ are both nonempty.
We now use Theorem~\ref{thm:1} on the irreducible pair $\{C,D\}$ to infer that
\begin{equation}\label{eq:cor1}
|C|=(x_1-y_1)+y_1+\sum_{i=2}^n x_i=\sum_{i=1}^n x_i\leq \max(D)= b_2.
\end{equation}
It follows from~\eqref{eq:cor1} that $|A|=\sum_{i=1}^n x_i=|C|\leq b_2<b_1=k-1$. 
This contradicts the fact that $|A|=b_1=k-1$ (see~\eqref{eq:cor0}).

\bs
\n{\bf Case 2:} $y_1>x_1$.

We perform an $(a_1,b_1)^{x_1}$-derivation from $\{A,B\}$ to 
obtain (by Lemma \ref{lem:1}) the $k$-irreducible pair $\{C,D\}$, where
\[C=\{x_1\cdot (a_1-b_1),x_2\cdot a_2,\ldots,x_n\cdot a_n\}
=\{x_1\cdot 1,x_2\cdot a_2,\ldots,x_n\cdot a_n\},\]
and $D=\{ (y_1-x_1)\cdot b_1,y_2\cdot b_2,\ldots,y_m\cdot b_m\}$.

If $n=1$, then $x_1=|A|=k-1$. So $y_1>x_1$ and $\ell(A,B)=2k-1$ imply $y_1=k$,
contradicting that  $\{A,B\}$ is different from $\left\{\{k\cdot(k-1)\},\{(k-1)\cdot k\}\right\}$. Thus we may assume that $n\geq 2$, that is, $a_2\in C$.

Since $a_2\not=b_1=k-1$, we must have $z=b_1-a_2>0$. If $z<x_1$ also holds, then $C'=\{a_2,z\cdot 1\}\subset C$ and $D'=\{b_1\}\subset D$ form a witness for the reducibility of $\{C,D\}$, which is a contradiction. Thus, we must have 
$b_1-a_2\geq x_1$.
We now use Theorem~\ref{thm:1} on the irreducible pair $\{C,D\}$ to infer that
\begin{equation}\label{eq:cor2}
|D|=(y_1-x_1)+\sum_{j=2}^m y_j=-x_1+\sum_{j=1}^m y_j\leq \max(C)=a_2\leq b_1-x_1.
\end{equation}
It follows from~\eqref{eq:cor2} that $|B|=\sum_{j=1}^m y_j=x_1+|D|\leq b_1=k-1$.
This contradicts the fact that $|B|=a_1=k$ (see~\eqref{eq:cor0}).
\end{proof}
\section{Concluding Remarks}\label{sec:conc}

One may wonder if our results can be extended to other infinite abelian groups.
For instance, consider irreducible pairs $\{A,B\}$, where $A$ and $B$ are multisets 
of rational numbers. Are there suitable 
(and general enough) conditions on the elements of $\{A,B\}$ that will guarantee 
that $\ell(A,B)$ is finite?

Finally, we remark that Theorem~\ref{thm:1} can be used to
bound the number of {\em $\lambda$-fold vector space partitions} (e.g., see~\cite{ESSSV}).
We shall address this application in a subsequent paper.

\bs\n{\bf Acknowledgement:}\
The authors thank G. Seelinger, L. Spence, and C. Vanden Eynden for providing useful
 suggestions that led to an improved version of this paper.

%

\bs\n \hrule
\end{document}